\documentclass[preprint,12pt]{elsarticle}
\usepackage{amsmath}
\usepackage{amscd}
\usepackage{amsfonts}
\usepackage{amssymb}
\usepackage{amsthm}
\usepackage{bm}                                                %
\usepackage{float}
\usepackage{graphicx}
\usepackage{latexsym}
\usepackage{lscape}
\usepackage{mathrsfs}
\usepackage{color,xcolor}
\usepackage{verbatim}
\usepackage{framed}
\usepackage{subfigure}

\usepackage[papersize={321mm,369mm}]{geometry}

\usepackage{tikz}
\usetikzlibrary{patterns, positioning, shapes.geometric, arrows.meta}
\usepackage{booktabs}
\usepackage{multirow}
\usepackage{graphicx}
\usepackage{subcaption}

\newtheorem{theorem}{Theorem}[section]
\newtheorem{lemma}{Lemma}[section]

\numberwithin{equation}{section}

\usepackage[notcite,notref]{showkeys}

\begin{document}
	\numberwithin{equation}{section}
	\title{A novel mixed spectral method with ball polynomials for the Biharmonic equation on a unit ball \tnoteref{thank1}}
	\tnotetext[thank1]{This work is partly supported by NSFC grants (12271233), NSF of Shandong Province (ZR2019YQ05).}
	
	\author{Mengxue Gao}\ead{gaomengxue8@gmail.com}
	\author{Bing Su\corref{cor}}\ead{subing@lyu.edu.cn}
	\author{Jianwei Zhou\corref{cor}}\ead{jwzhou@yahoo.com}
	
	\address{School of Mathematics and Statistics, Linyi University, Linyi 276005, P.R. China}
	
	\cortext[cor]{Corresponding author.}
	
	\begin{abstract} 
		A novel mixed spectral-Galerkin method based on generalized ball polynomials is proposed for solving the biharmonic equation on a unit ball. By introducing an auxiliary variable to decouple the biharmonic equation into a system of second-order equations, the corresponding discrete scheme yields a strictly diagonal stiffness matrix, which significantly enhances the computational efficiency. Rigorous a-priori error estimates are established to demonstrate the exponential convergence rates in both the $L^2$- and $H^1$-norms. Extensive numerical experiments are conducted to verify the theoretical analysis and confirm the high efficiency and accuracy of the proposed scheme.
	\end{abstract}
	
	\begin{keyword}  Biharmonic equation, mixed spectral method, ball polynomial, a-priori error estimate.
	\end{keyword}
	
	
	\date{}
	
	\maketitle
	

	\section{Introduction}
	The biharmonic equation plays a fundamental role in various fields of mathematical physics and engineering. Over the past few decades, extensive research has been devoted to developing efficient and accurate numerical schemes for this typical fourth-order partial differential equation. Conventionally, finite element methods and finite difference schemes \cite{EG75} have been widely employed to solve such problems. In recent years, various stabilized and efficient mixed methods have been proposed to improve computational stability and convergence. To alleviate the stringent $C^1$-continuity requirement of standard conforming elements, the mixed finite element method was introduced and subsequently analyzed in \cite{CR74,Mon87}. For more details, we refer the readers to \cite{BG11,KBW23,Lam11,LZ17} and the references cited therein. 
	On the other hand, spectral methods are widely adopted in scientific computing for their capability to achieve exponential convergence rates for biharmonic equations. 
	While extensive studies exist for rectangular or polygonal domains, employing Legendre and Chebyshev polynomials\cite{BK01,She95}, spectral collocation methods\cite{MT07}, as well as Legendre spectral-element approaches \cite{ZC17}, research on canonical domains such as the three-dimensional unit ball requires special consideration.
	The theoretical foundation for using orthogonal polynomials of several variables was established in \cite{DX14}, with subsequent spectral approximation theories on the unit ball in \cite{LX14}. However, efficient direct spectral solvers and rigorous a-priori error analysis for the biharmonic equations remain largely unexplored compared to their second-order counterparts.
	
	In this paper, we propose and analyze a novel mixed spectral-Galerkin method for solving the biharmonic equation with simply supported boundary conditions:
	\begin{equation}\label{eq:biharmonic_model}
		\left\{
		\begin{aligned}
			\Delta^2 u &= f, \quad  \text{in}\ \mathbb{B}^3, \\
			u & = 0, \quad  \text{on}\ \mathbb{S}^2,\\
			\Delta u & = 0, \quad \text{on}\ \mathbb{S}^2,
		\end{aligned}
		\right.
	\end{equation}
	where $\mathbb{B}^3$ denotes the three-dimensional unit ball and $\mathbb{S}^2$ denotes the unit sphere of $\mathbb{B}^3$. 
	We construct an efficient direct spectral-Galerkin scheme by utilizing generalized ball polynomials as basis functions. A significant advantage of this approach is that it is directly formulated on the ball, naturally avoiding the coordinate singularities associated with standard spherical transformations. By introducing an auxiliary variable, the discrete scheme yields a strictly diagonal stiffness matrix, which greatly reduces the computational complexity. Beyond the solver construction, a primary focus of this work is the establishment of a rigorous theoretical framework for the a-priori error analysis in both $L^2$- and $H^1$-norms. 
	Furthermore, we conduct extensive numerical experiments to validate our theoretical results and demonstrate the efficiency of the proposed scheme. We demonstrate that the numerical method achieves spectral accuracy for analytic and transcendental smooth solutions. The numerical results highly align with our theoretical error estimates, confirming the robustness and high precision of the mixed spectral formulation for biharmonic problems on the unit ball.
	
	The remainder of this paper is organized as follows. In Section 2, we introduce the necessary notations and fundamental properties of Jacobi polynomials and generalized ball polynomials. Section 3 details the mixed spectral-Galerkin formulation and rigorously derives the corresponding a-priori error estimates in both the $L^2$- and $H^1$-norms. Section 4 provides numerical experiments to validate the theoretical results and demonstrate the efficiency of our proposed method. Finally, we draw a short conclusion in Section 5.
	
	\section{Preliminaries and Notations} 
	
	We first introduce some standard notations that will be used throughout this paper. Let $\mathbb{N}$ and $\mathbb{N}_0$ denote the set of positive and nonnegative integers, respectively. For any vector $x \in \mathbb{R}^3$, we write $x = (x_1, x_2, x_3)^{\top}$ as a column vector, where $(\cdot)^{\top}$ denotes the transpose operation.
	Let $\mathbb{B}^3 = \{x \in \mathbb{R}^3: \|x\| \leq 1\}$ be unit ball centered at the origin. 
	We adopt the standard Sobolev space $H^m(\mathbb{B}^3)$ with $m \in \mathbb{N}_0$ on $\mathbb{B}^3$, which is equipped with the norm $\|\cdot\|_{m,\mathbb{B}^3}$ and semi-norm $|\cdot|_{m,\mathbb{B}^3}$.  
	The space $H_{0}^m(\mathbb{B}^3)$ is defined as the closure of $C_0^\infty(\mathbb{B}^3)$ within $H^m(\mathbb{B}^3)$. Specially, $H^0(\mathbb{B}^3) = L^2(\mathbb{B}^3)$, and the corresponding inner-product reads:  
	\begin{equation} \label{eq:inner_form}
		(u, v)_{L^2(\mathbb{B}^3)} = \int_{\mathbb{B}^3} u(x) v(x) \, \mathrm{d}x,
	\end{equation}
	and the corresponding norm is given by $\|u\| = \sqrt{(u, u)_{L^2(\mathbb{B}^3)}}$.
	
	Throughout this paper, we use $C$ and $c$ to denote generic positive constants that are independent of the polynomial degree and the domain, which may take different values at different occurrences.

	\subsection{Jacobi polynomials and their Properties} 
	
	Let $I = (-1, 1)$ and $w^{\alpha, \beta}(t) = (1-t)^\alpha (1+t)^\beta$ be a weight function with $\alpha, \beta > -1$. The Jacobi polynomials $\{P_n^{\alpha, \beta}(t)\}_{n \ge 0}$ form an orthogonal basis set for $L^2_{w^{\alpha, \beta}}(I)$ \cite{DX14}, which satisfy the following orthogonality identity:
	\begin{equation} \label{eq:Jacobi-orth} 
		\int_I P_n^{\alpha, \beta}(t) P_m^{\alpha, \beta}(t) w^{\alpha, \beta}(t) \, \mathrm{d}t = \hat{h}_n^{\alpha, \beta} \delta_{nm}, 
	\end{equation} 
	where  
	\begin{equation*} \label{eq:Jacobi-norm} 
		\hat{h}_n^{\alpha, \beta} = \frac{2^{\alpha+\beta+1}}{2n+\alpha+\beta+1} \frac{\Gamma(n+\alpha+1)\Gamma(n+\beta+1)}{n!\Gamma(n+\alpha+\beta+1)}. 
	\end{equation*} 
	
	Note that $P_n^{\alpha, \beta}$ satisfies the Sturm-Liouville equation: 
	\begin{equation*} \label{eq:Jacobi-SL} 
		-\partial_t \left( w^{\alpha+1, \beta+1}(t) \partial_t P_n^{\alpha, \beta}(t) \right) = \lambda_n^{\alpha, \beta} w^{\alpha, \beta}(t) P_n^{\alpha, \beta}(t)
	\end{equation*} 
	with the corresponding eigenvalue $\lambda_n^{\alpha, \beta} = n(n+\alpha+\beta+1)$. 
	Moreover, the derivative of $P_n^{\alpha, \beta}(t)$ relates to a parameter-shifted Jacobi polynomial as follows: 
	\begin{equation*} \label{eq:Jacobi-diff} 
		\partial_t P_n^{\alpha, \beta}(t) = \frac{1}{2}(n+\alpha+\beta+1) P_{n-1}^{\alpha+1, \beta+1}(t). 
	\end{equation*} 
	This index-shifting property is crucial for constructing Sobolev-orthogonal basis functions that are compatible with the mixed weak formulation involving second-order derivatives. 
	
	\subsection{Spherical harmonics and ball polynomials} 
	
	For any non-zero vector ${x} \in \mathbb{B}^3$, we adopt the polar decomposition ${x} = r{\xi}$, where $r = \|{x}\|$ represents the radial distance and ${\xi} = \frac{x}{r} \in \mathbb{S}^2$ denotes the angular direction. The standard $L^2$-inner product on the unit sphere is denoted by 
	\begin{equation*} \label{eq:SH-inner} 
		(u, v)_{\mathbb{S}^2} = \int_{\mathbb{S}^2} u({\xi}) v({\xi}) \, \mathrm{d}\omega({\xi}), 
	\end{equation*} 
	where $\mathrm{d}\omega$ denotes the spherical surface measure. 
	
	We denote by $\mathcal{H}_n(\mathbb{S}^2)$ the finite dimension space with $n$-order spherical harmonics, which is defined as the restriction of homogeneous harmonic polynomials of degree $n$ to the unit sphere \cite{DX14}. The dimension of this space is given by 
	\begin{equation*} \label{eq:dim-Hn} 
		d_n := \dim \mathcal{H}_n(\mathbb{S}^2) = 2n+1. 
	\end{equation*} 
	For each $n \ge 0$, we fix an orthogonal basis set $\{Y_l^n\}_{l=1}^{2n+1}$ for $\mathcal{H}_n(\mathbb{S}^2)$, which satisfies 
	\begin{equation*} \label{eq:SH-orth} 
		(Y_l^n, Y_{l'}^{n'})_{\mathbb{S}^2} = \delta_{nn'} \delta_{ll'}, \quad (n, l), (n', l') \in \Upsilon^3, 
	\end{equation*} 
	where the index sets are defined as
	\begin{equation*} 
		\Upsilon^3 = \left\{(n, l) : 1 \le l \le 2n+1, \, n \in \mathbb{N}_0 \right\}, \quad \Upsilon_N^3 = \left\{(n, l) \in \Upsilon^3 : n \le N \right\}. 
	\end{equation*} 
	
	Recalling the spherical coordinates $x = (\sin\theta \cos\phi, \sin\theta \sin\phi, \cos\theta)^{\top}$ on the unit sphere $\mathbb{S}^2$, i.e., $\|x\|=1$, the orthonormal bases $\{Y_l^n\}_{l=1}^{2n+1}$ can be explicitly constructed by Jacobi polynomials as follows \cite{DX14}:
	\begin{align*}  
		Y_1^n(\theta, \phi) &= \sqrt{\frac{2n+1}{4\pi}} P_n^{0,0}(\cos\theta), \\		
		Y_{2j}^n(\theta, \phi) &= C_{n,j} (\sin\theta)^j \cos(j\phi) P_{n-j}^{j,j}(\cos\theta), \quad	 
		Y_{2j+1}^n(\theta, \phi) = C_{n,j} (\sin\theta)^j \sin(j\phi) P_{n-j}^{j,j}(\cos\theta) ,  
	\end{align*} 
	where $1 \le j \le n$ and the normalization constant
	\begin{equation*} 
		C_{n,j} = \frac{1}{2^{j}} \sqrt{\frac{(2n+1) \Gamma(n-j+1) \Gamma(n+j+1)}{2\pi \Gamma(n+1)^2}}. 
	\end{equation*} 
	
	For $\alpha > -1$, we define the ball polynomials as
	\begin{equation*} \label{eq:ball_poly_def}
		B_{k,l}^{\alpha, n}(x) = \frac{(n+k+\frac{3}{2})_k}{(n+k+\frac{3}{2}+\alpha)_k} P_k^{\alpha, n+\frac{1}{2}}(2\|x\|^2 - 1) Y_l^n(x),
	\end{equation*}
	where $x \in \mathbb{B}^3$, $n$ is the total degree of the spherical harmonic part, and $k \in \mathbb{N}_0$ is the radial degree.
	Note that the total degree of $B_{k,l}^{\alpha, n}(x)$ is $n+2k$.
	
	The ball polynomials are mutually orthogonal with respect to the weight function $\omega_\alpha({x}) = (1 - \|x\|^2)^\alpha$ for $\alpha > -1$ \cite{LX14}:
	\begin{equation*} \label{eq:ball_orth}
		\int_{\mathbb{B}^3} B_{k,l}^{\alpha, n}(x) B_{\iota,\ell}^{\alpha, m}(x) \omega_{\alpha}(x) \, \mathrm{d}x = h_k^{\alpha, n} \delta_{k,\iota} \delta_{l,\ell} \delta_{n,m},
	\end{equation*}
	where the normalization constant is given by
	\begin{equation*}
		h_k^{\alpha, n} = \frac{\Gamma(k+\alpha+1)\Gamma(n+2k+\frac{3}{2}) (n+k+\frac{3}{2})_k}{2 \Gamma(k+1) \Gamma(n+2k+\alpha+\frac{5}{2}) (n+k+\alpha+\frac{3}{2})_k}.
	\end{equation*}
	
	Specifically, we highlight two special cases
	\begin{equation*}
		h_k^{0,n} = \frac{1}{2n+4k+3}, \quad h_k^{1,n} = \frac{2(k+1)(2n+2k+3)}{(2n+4k+3)^2(2n+4k+5)}.
	\end{equation*}
	The orthogonal ball polynomials $\{B_{k,l}^{\alpha, n}(x)\}$ are the eigenfunctions of the following second-order differential operator:
	\begin{equation*}
		-(1-\|x\|^2)^{-\alpha}\nabla \cdot \left[(\mathrm{I}-xx^{\mathrm{t}})(1-\|x\|^2)^{\alpha}\nabla  B_{k,l}^{\alpha, n}(x)\right] = (n+2k)(n+2k+2\alpha+3) B_{k,l}^{\alpha, n}(x),
	\end{equation*}
	where $\rm{I}$ denotes the identity operator.
	
	\subsection{Model Problem and Weak Formulation}
	
	In this section, we define the standard bilinear form
	\begin{equation} \label{eq:bilinear_form} 
		a(u,v) = \int_{\mathbb{B}^3} \nabla u \cdot \nabla v \, \mathrm{d}x.
	\end{equation}
	
	By introducing an auxiliary variable $\sigma = -\Delta u$, we propose the following mixed weak formulation of \eqref{eq:biharmonic_model}: find $(\sigma, u) \in H^1_0(\mathbb{B}^3) \times H^1_0(\mathbb{B}^3)$ such that
	\begin{equation} \label{eq:weak_mixed}
		\begin{cases}
			a(\sigma, v) = (f, v)_{L^2(\mathbb{B}^3)}, & \forall v \in H^1_0(\mathbb{B}^3), \\
			a(u, \tau) = (\sigma, \tau)_{L^2(\mathbb{B}^3)}, & \forall \tau \in H^1_0(\mathbb{B}^3).
		\end{cases}
	\end{equation}
	This splitting framework is standard for the simply supported biharmonic equation \cite{CR74,She95}. The well-posedness of \eqref{eq:weak_mixed} is guaranteed by the Lax-Milgram theorem applied sequentially to the two symmetric positive-definite subproblems. This relies on the continuity and coercivity of $a(\cdot, \cdot)$ on $H^1_0(\mathbb{B}^3)$, i.e., there hold:
	\begin{align}
		|a(u,v)| \le C \|u\|_{H^1(\mathbb{B}^3)} \|v\|_{H^1(\mathbb{B}^3)}, \quad
		a(v,v) \ge c \|v\|_{H^1(\mathbb{B}^3)}^2, \quad \forall u,v \in H^1_0(\mathbb{B}^3). \label{eq:coercivity}
	\end{align} 
	For comprehensive details on these fundamental properties and the associated variational theory, we refer the reader to classical monographs such as \cite{Cia02}.
	
	\section{Novel Spectral-Galerkin Approximation} 
	
	To enforce the homogeneous boundary conditions, we employ generalized ball polynomials to construct an approximation space \cite{DX14, LX14}. 
	The generalized ball polynomials with $\alpha=-1$, which ensure the gradient orthogonality of the basis functions, as follows:
	\[
	B_{0,l}^{-1,n}(x) := B_{0,l}^{0,n}(x), \qquad
	B_{k,l}^{-1,n}(x) := B_{k,l}^{0,n}(x) - B_{k-1,l}^{0,n}(x), \quad k \in \mathbb{N},\quad (n,l)\in \Upsilon^3.
	\]
	Here, $k \in \mathbb{N}$ represents the radial degree, which guarantees that the basis functions vanish on the boundary $\mathbb{S}^2$. 
	
	\subsection{Basis function and mixed formulation} 
	
	Hence, we define finite-dimensional approximation spaces for our mixed spectral-Galerkin method. Let $V_N \subset H^1_{0}(\mathbb{B}^3)$ be the space spanned by basis polynomials of total degree at most $N$: 
	\begin{equation} \label{eq:VN} 
		V_N := \text{span} \left\{ B_{k,l}^{-1, n}({x}) : 2 \le 2k+n \le N, \, k \in \mathbb{N}, \, (n, l) \in \Upsilon_N^3 \right\}. 
	\end{equation} 
	
	A fundamental advantage of this approximation space is that it completely diagonalizes the stiffness matrix \cite{LX14}, whose entries are defined by \eqref{eq:bilinear_form}. In fact, the following gradient orthogonality relation holds: 
	\begin{equation}\label{grad-orth} 
		a(B_{k,l}^{-1,n}, B_{k',l'}^{-1,n'}) = 
		\left( \nabla B_{k,l}^{-1,n}, \nabla B_{k',l'}^{-1,n'} \right)_{L^2(\mathbb{B}^3)} = \lambda_{k,n} \, \delta_{k,k'} \, \delta_{n,n'} \, \delta_{l,l'}, 
	\end{equation} 
	where $$\lambda_{k,n} = 2n + 4k + 1. $$
	
	Using the approximation space defined in \eqref{eq:VN}, we now state the discrete mixed formulation of \eqref{eq:weak_mixed}: find  $(\sigma_N, u_N) \in V_N \times V_N$ such that: 
	\begin{equation} \label{eq:discrete_problem} 
		\begin{cases}
			a(\sigma_N,v_N) = (f,v_N)_{L^2(\mathbb{B}^3)},
			& \forall v_N \in  V_N,\\
			a(u_N,\tau_N) = (\sigma_N,\tau_N)_{L^2(\mathbb{B}^3)},
			& \forall \tau_N \in  V_N.
		\end{cases}
	\end{equation} 
	It is straightforward to conclude that the discrete system \eqref{eq:discrete_problem} is well-posed, which is guaranteed by the judicious choice of the approximation spaces. For more details, we refer the readers to \cite{Cia02, She95}.
	
	\subsection{A-priori error estimates}
	
	In this subsection, we investigate the a-priori error estimates for $u$ and $\sigma$ in our proposed mixed spectral-Galerkin formulation \eqref{eq:discrete_problem}, respectively. 
	To facilitate our analysis, we recall a standard orthogonal projection (i.e., the Ritz projection), $\Pi_N: H^1_0(\mathbb{B}^3) \mapsto V_N$, satisfying
	\begin{equation}\label{grad-dis}
		a(v - \Pi_N v,  w_N) = 0, \quad \forall w_N \in V_N.
	\end{equation}
	
	\begin{lemma}[\cite{DX14, LX14}]
		\label{lem:projection}
		Let $v \in 
		H^m(\mathbb{B}^3) \cap H_0^1(\mathbb{B}^3)$ with $m \ge 1$. Then there holds
		\begin{equation*}
			\|v-\Pi_N v\| \le C N^{-m} \|v\|_{H^m(\mathbb{B}^3)}.
		\end{equation*}
		Moreover, for the $H^1$-norm, we have
		\begin{equation}
			\label{projection}
			\|v-\Pi_N v\|_{H^1(\mathbb{B}^3)} \le C N^{1-m} \|v\|_{H^m(\mathbb{B}^3)}.
		\end{equation}
	\end{lemma}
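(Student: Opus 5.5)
The plan is to reduce both estimates to best-approximation results for polynomial spaces on the ball, the kind established in \cite{LX14}. The $H^1$ bound comes from Céa-type optimality of the Ritz projection. The $L^2$ bound then follows from an Aubin--Nitsche duality argument.

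\textbf{Step 1 ($H^1$ estimate).} By the definition \eqref{grad-dis}, $\Pi_N$ is the $a(\cdot,\cdot)$-orthogonal projection onto $V_N$. Hence for every $w_N\in V_N$ we have $a(v-\Pi_N v,v-\Pi_N v)=a(v-\Pi_N v,v-w_N)$. Combined with the continuity and coercivity \eqref{eq:coercivity}, this gives
\[
\|v-\Pi_N v\|_{H^1(\mathbb{B}^3)}\le C\inf_{w_N\in V_N}\|v-w_N\|_{H^1(\mathbb{B}^3)} .
\]
It then suffices to exhibit a single $w_N\in V_N$ with $\|v-w_N\|_{H^1}\le CN^{1-m}\|v\|_{H^m}$.

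One natural choice is to expand $v$ in the basis $\{B^{-1,n}_{k,l}\}$, which is complete in $H^1_0(\mathbb{B}^3)$ by the gradient orthogonality \eqref{grad-orth}, and truncate at total degree $N$. Since that basis is $a$-orthogonal, the truncation is exactly $\Pi_N v$. The estimate therefore reduces to bounding the tail $\sum_{n+2k>N}\lambda_{k,n}|\hat v_{k,l}^n|^2$ by $N^{2-2m}\|v\|_{H^m}^2$.

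\textbf{Step 2 (tail bound).} To bound the tail, I would use the fact that $\nabla B^{-1,n}_{k,l}$ can be expressed through the ball polynomials with $\alpha=0$ (via the derivative formula for Jacobi polynomials). The coefficients of $\nabla v$ in the orthogonal system $\{B^{0,n}_{k,l}\}$ are then related to those of $v$. The decay of these coefficients is governed by the eigenvalue relation for the operator $-\nabla\cdot[(\mathrm{I}-xx^{\rm t})\nabla\,\cdot\,]$ with eigenvalues $(n+2k)(n+2k+3)$.

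Applying this operator repeatedly, i.e., integrating by parts $m-1$ times, should give
\[
\sum_{n+2k>N}|\widehat{(\partial v)}|^2\le CN^{-2(m-1)}\|v\|^2_{H^m}.
\]
This is the main obstacle. Powers of that degenerate operator are controlled by weighted Sobolev norms rather than directly by $\|v\|_{H^m}$, and the tangential and radial derivatives must be handled separately. Rather than re-derive this, I would invoke the simultaneous approximation theorem of \cite{LX14} (Theorem on $H^1_0$-projection on the ball), which states precisely $\inf_{w_N}\|v-w_N\|_{H^1}\le CN^{1-m}\|v\|_{H^m}$ for $v\in H^m\cap H^1_0$.

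\textbf{Step 3 ($L^2$ estimate by duality).} Let $e=v-\Pi_N v$ and solve $-\Delta\phi=e$ in $\mathbb{B}^3$ with $\phi=0$ on $\mathbb{S}^2$. Since the ball is smooth and convex, elliptic regularity gives $\|\phi\|_{H^2}\le C\|e\|$. Galerkin orthogonality \eqref{grad-dis} then yields
\[
\|e\|^2=a(e,\phi)=a(e,\phi-\Pi_N\phi)\le C\|e\|_{H^1}\|\phi-\Pi_N\phi\|_{H^1}.
\]
By Step 1 with $m=2$, the second factor is bounded by $CN^{-1}\|\phi\|_{H^2}\le CN^{-1}\|e\|$. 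Hence
\[
\|e\|\le CN^{-1}\|e\|_{H^1}\le CN^{-m}\|v\|_{H^m},
\]
where the last inequality uses \eqref{projection}. This completes the argument for $m\ge2$. For $m=1$, the $L^2$ bound $\|e\|\le C\|e\|_{H^1}\le C\|v\|_{H^1}$ loses a factor $N^{-1}$. Since the estimate $\|e\|\le CN^{-1}\|e\|_{H^1}$ still holds in that case, the general duality bound covers it as well.
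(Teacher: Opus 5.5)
Your argument is correct. It is also more explicit than the paper, which gives no proof of Lemma~\ref{lem:projection} and simply attributes both estimates to \cite{DX14, LX14}.

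You import only one ingredient from \cite{LX14}: the best-approximation rate $\inf_{w_N\in V_N}\|v-w_N\|_{H^1(\mathbb{B}^3)}\le CN^{1-m}\|v\|_{H^m(\mathbb{B}^3)}$ for $v\in H^m(\mathbb{B}^3)\cap H^1_0(\mathbb{B}^3)$, using polynomials that vanish on $\mathbb{S}^2$. Everything else you derive yourself. C\'ea's lemma transfers this rate to $\Pi_N$. The Aubin--Nitsche argument, which is the same duality device the paper later uses in Theorem~\ref{thm:quasi_opt_sigma}, combined with $H^2$-regularity of the Dirichlet Laplacian on the ball, gives $\|v-\Pi_N v\|\le CN^{-1}\|v-\Pi_N v\|_{H^1(\mathbb{B}^3)}$. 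The $L^2$ rate then follows.

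What your route buys is transparency. It shows the $L^2$ estimate is not an independent input, and that the duality step holds uniformly for every $m\ge 1$. So your separate discussion of the case $m=1$ is unnecessary. The paper's citation buys only brevity.

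Two small corrections:
\begin{itemize}
\item Completeness of $\{B^{-1,n}_{k,l}\}$ in $H^1_0(\mathbb{B}^3)$ comes from density of polynomials vanishing on $\mathbb{S}^2$, i.e.\ density of $\bigcup_N V_N$. It does not follow from the gradient orthogonality \eqref{grad-orth}.
\item Once you observe that truncating the $a$-orthogonal expansion gives exactly $\Pi_N v$, the C\'ea step adds nothing for that choice of $w_N$. C\'ea is only needed if the result you cite from \cite{LX14} is a best-approximation statement rather than a statement about the projection itself.
\end{itemize}
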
		
	
	Subsequently, we derive a-priori error estimates in $L^2$- and $H^1$-norms by utilizing the aforementioned approximation properties, respectively.
	
	\begin{theorem}\label{thm:quasi_opt_sigma}
		Let $(u,\sigma)$ and $(u_N, \sigma_N)$ be the solutions of \eqref{eq:weak_mixed} and the corresponding discrete system \eqref{eq:discrete_problem}, respectively. For $\sigma \in H^{s-2}(\mathbb{B}^3)$ with $s \ge 3$, there hold the following a-priori error estimates:
		\begin{equation}\label{eq:error-sig1}
			\|\sigma-\sigma_N\|_{H^1(\mathbb{B}^3)}
			\le C N^{3-s}\|\sigma\|_{H^{s-2}(\mathbb{B}^3)}, \quad 
			\|\sigma - \sigma_N\| \le C N^{2-s} \|\sigma\|_{H^{s-2}(\mathbb{B}^3)}.
		\end{equation}
	\end{theorem}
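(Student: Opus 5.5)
The plan is to exploit the fact that the first equation of \eqref{eq:discrete_problem} decouples: $\sigma_N$ is determined by $f$ alone. Comparing it with the first equation of \eqref{eq:weak_mixed} restricted to $v=v_N\in V_N\subset H^1_0(\mathbb{B}^3)$ gives Galerkin orthogonality,
\[
a(\sigma-\sigma_N, v_N)=0,\quad \forall v_N\in V_N .
\]
By the definition \eqref{grad-dis} of the Ritz projection, this means $\sigma_N=\Pi_N\sigma$. Consequently $\sigma-\sigma_N=\sigma-\Pi_N\sigma$, and both estimates in \eqref{eq:error-sig1} reduce to the projection error bounds of Lemma~\ref{lem:projection}. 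The coupling with $u$ plays no role here.

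For the $H^1$ bound I would first note that $\sigma=-\Delta u\in H^1_0(\mathbb{B}^3)$. The hypothesis $\sigma\in H^{s-2}(\mathbb{B}^3)$ with $s\ge3$ then gives $m:=s-2\ge1$, so Lemma~\ref{lem:projection} applies with $m=s-2$. The estimate \eqref{projection} yields
\[
\|\sigma-\sigma_N\|_{H^1(\mathbb{B}^3)}\le CN^{1-(s-2)}\|\sigma\|_{H^{s-2}(\mathbb{B}^3)}=CN^{3-s}\|\sigma\|_{H^{s-2}(\mathbb{B}^3)} .
\]
If one prefers not to identify $\sigma_N$ with $\Pi_N\sigma$ explicitly, the same bound follows from C\'ea's lemma. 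Coercivity and continuity \eqref{eq:coercivity} give $\|\sigma-\sigma_N\|_{H^1}\le (C/c)\inf_{v_N\in V_N}\|\sigma-v_N\|_{H^1}$, and one then takes $v_N=\Pi_N\sigma$.

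For the $L^2$ bound, the $L^2$ part of Lemma~\ref{lem:projection} with $m=s-2$ gives $\|\sigma-\Pi_N\sigma\|\le CN^{2-s}\|\sigma\|_{H^{s-2}}$ directly. If instead one wants to derive it from the $H^1$ bound, the route is the Aubin--Nitsche duality argument. Let $w\in H^1_0$ solve $a(v,w)=(\sigma-\sigma_N,v)$ for all $v\in H^1_0$. Elliptic regularity on the smooth domain $\mathbb{B}^3$ gives $\|w\|_{H^2}\le C\|\sigma-\sigma_N\|$. Galerkin orthogonality and \eqref{projection} with $m=2$ then give
\[
\|\sigma-\sigma_N\|^2=a(\sigma-\sigma_N,w-\Pi_Nw)\le C\|\sigma-\sigma_N\|_{H^1}N^{-1}\|w\|_{H^2}.
\]
Dividing through and using the $H^1$ estimate yields $N^{2-s}$.

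The only delicate point I expect is checking the hypotheses: that $\sigma$ genuinely lies in $H^1_0$, which comes from the weak formulation, and that the regularity index matches $m=s-2\ge1$. For the duality variant one also needs $H^2$ elliptic regularity of the Dirichlet Laplacian on the ball, which is classical since $\mathbb{S}^2$ is smooth.
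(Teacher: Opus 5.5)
Your proposal is correct and follows the paper's proof. The paper uses exactly your fallback routes: C\'ea's lemma (coercivity, continuity and Galerkin orthogonality) with $v_N=\Pi_N\sigma$ and Lemma~\ref{lem:projection} at $m=s-2$ for the $H^1$ bound, then an Aubin--Nitsche duality argument with an $H^2$-regular dual solution for the $L^2$ bound. Your observation that $\sigma_N=\Pi_N\sigma$ is valid and only shortens this, since the $L^2$ part of Lemma~\ref{lem:projection} with $m=s-2$ then gives the second estimate directly (that bound is itself the duality argument in disguise).
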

	
	\begin{proof}
		
		For any $v_N \in V_N$, we have
		\begin{align*}
			c \|\sigma - \sigma_N\|_{H^1(\mathbb{B}^3)}^2 
			&\le a(\sigma - \sigma_N, \sigma - \sigma_N) 
			= a(\sigma - \sigma_N, \sigma - v_N) + a(\sigma - \sigma_N, v_N - \sigma_N)\\
			&
			= a(\sigma - \sigma_N, \sigma - v_N)\le C\|\sigma - \sigma_N\|_{H^1(\mathbb{B}^3)} \|\sigma - v_N\|_{H^1(\mathbb{B}^3)},
		\end{align*}	
		which depends on the discrete coercivity and the Galerkin orthogonality \eqref{grad-dis}.
		Consequently, we readily get
		\begin{align*}
			\|\sigma - \sigma_N\|_{H^1(\mathbb{B}^3)}\le C \|\sigma - v_N\|_{H^1(\mathbb{B}^3)}.
		\end{align*}	     	
		Setting $v_N = \Pi_N \sigma$ and applying the approximation \eqref{projection}, we obtain
		\[
		\|\sigma - \sigma_N\|_{H^1(\mathbb{B}^3)} \le C \|\sigma - \Pi_N \sigma\|_{H^1(\mathbb{B}^3)} \le C N^{1-(s-2)} \|\sigma\|_{H^{s-2}(\mathbb{B}^3)} = C N^{3-s} \|\sigma\|_{H^{s-2}(\mathbb{B}^3)}.
		\]
		
		Furthermore, we establish the error estimate in the $L^2$-norm via the standard Aubin-Nitsche duality argument. The dual problem reads: find $\phi\in H^1_{0}(\mathbb{B}^3)\cap H^2(\mathbb{B}^3)$ such that
		\[
		- \Delta \phi = \sigma - \sigma_N,
		\]
		which is equivalent to
		\begin{equation}\label{eq:dual_weak}
			a(\phi,v)=(\sigma - \sigma_N, v)_{L^2(\mathbb{B}^3)}, \quad \forall v\in H^1_{0}(\mathbb{B}^3).
		\end{equation}
		
		Setting $v = \sigma - \sigma_N \in H_0^1(\mathbb{B}^3)$ in \eqref{eq:dual_weak} yields
		\begin{equation*} 
			\begin{aligned}
				\|\sigma - \sigma_N\|^2 = a(\phi, \sigma - \sigma_Ne_\phi) 
				\le  \|\nabla (\sigma - \sigma_N)\| \|\nabla(\phi - \Pi_N\phi)\|
				\le C N^{-1} \|\sigma - \sigma_N\|_{H^1(\mathbb{B}^3)} \|\phi\|_{H^2(\mathbb{B}^3)}.
			\end{aligned}
		\end{equation*}			
		Noting that
		\begin{equation*}\label{eq:adj_reg_sigma}
			\|\phi\|_{H^2(\mathbb{B}^3)}\le C \|\sigma - \sigma_N\|,
		\end{equation*}
		then there holds
		\begin{equation*}\label{eq:adj_sigma}
			\|\sigma - \sigma_N\| 
			\le c N^{-1}\|\sigma - \sigma_N\|_{H^1(\mathbb{B}^3)}
			\le C N^{2-s}\|\sigma\|_{H^{s-2}(\mathbb{B}^3)}.
		\end{equation*}
		This completes the proof.
	\end{proof}
	
	\begin{theorem} 
		\label{thm:u_H1_error}
		Let $(u,\sigma)$ and $(u_N, \sigma_N)$ be the solutions of \eqref{eq:weak_mixed} and the corresponding discrete system \eqref{eq:discrete_problem}, respectively.
		Assume $u \in H^s(\mathbb{B}^3)$ and $\sigma \in H^{s-2}(\mathbb{B}^3)$ with $s \ge 3$.
		Then there hold the following a-priori error estimates
		\begin{equation*}
			c \|u - u_N\|_{H^1(\mathbb{B}^3)} \le N^{1-s} \|u\|_{H^s(\mathbb{B}^3)} 
			+ N^{2-s} \|\sigma\|_{H^{s-2}(\mathbb{B}^3)}, \quad
			C \|u - u_N\| \le   N^{-s}\|u\|_{H^s(\mathbb{B}^3)} + N^{1-s}  \|\sigma\|_{H^{s-2}(\mathbb{B}^3)}.
		\end{equation*}
	\end{theorem}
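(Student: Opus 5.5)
The plan is to split the error with the Ritz projection of Lemma~\ref{lem:projection}, writing
\[
u-u_N=(u-\Pi_N u)+(\Pi_N u-u_N)=:\eta+e_N, \qquad e_N\in V_N .
\]
The first piece is controlled directly by \eqref{projection} and the $L^2$ estimate of Lemma~\ref{lem:projection} with $m=s$:
\[
\|\eta\|_{H^1(\mathbb{B}^3)}\le CN^{1-s}\|u\|_{H^s(\mathbb{B}^3)},\qquad \|\eta\|\le CN^{-s}\|u\|_{H^s(\mathbb{B}^3)}.
\]
For the discrete piece, subtract the second equation of \eqref{eq:discrete_problem} from that of \eqref{eq:weak_mixed} with $\tau=\tau_N\in V_N$. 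This gives the perturbed Galerkin relation $a(u-u_N,\tau_N)=(\sigma-\sigma_N,\tau_N)$. By \eqref{grad-dis}, $a(\eta,\tau_N)=0$, so
\[
a(e_N,\tau_N)=(\sigma-\sigma_N,\tau_N)_{L^2(\mathbb{B}^3)},\qquad \forall\tau_N\in V_N .
\]

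For the $H^1$ bound, take $\tau_N=e_N$. Coercivity \eqref{eq:coercivity} and Cauchy--Schwarz give $c\|e_N\|_{H^1}^2\le\|\sigma-\sigma_N\|\,\|e_N\|\le\|\sigma-\sigma_N\|\,\|e_N\|_{H^1}$, hence $\|e_N\|_{H^1}\le C\|\sigma-\sigma_N\|$. Theorem~\ref{thm:quasi_opt_sigma} then bounds this by $CN^{2-s}\|\sigma\|_{H^{s-2}}$. The triangle inequality yields the stated $H^1$ estimate.

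For the $L^2$ bound, I would use an Aubin--Nitsche argument exactly as in Theorem~\ref{thm:quasi_opt_sigma}. Let $\phi\in H^1_0\cap H^2$ solve $a(\phi,v)=(u-u_N,v)$ for all $v\in H_0^1$, with $\|\phi\|_{H^2}\le C\|u-u_N\|$. Taking $v=u-u_N$ and inserting $\Pi_N\phi$ gives
\[
\|u-u_N\|^2=a(u-u_N,\phi-\Pi_N\phi)+a(u-u_N,\Pi_N\phi)=a(u-u_N,\phi-\Pi_N\phi)+(\sigma-\sigma_N,\Pi_N\phi).
\]
The first term is at most $CN^{-1}\|u-u_N\|_{H^1}\|u-u_N\|$. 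Together with the $H^1$ bound just proved, it contributes $N^{-s}\|u\|_{H^s}+N^{1-s}\|\sigma\|_{H^{s-2}}$. The second term is at most $\|\sigma-\sigma_N\|\,\|\Pi_N\phi\|\le C\|\sigma-\sigma_N\|\,\|u-u_N\|$, using $H^1$-stability of $\Pi_N$ and Poincaré.

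\textbf{Main obstacle.} The crude bound on the second term only reproduces $N^{2-s}\|\sigma\|_{H^{s-2}}$, one power short of the claimed $N^{1-s}$. To gain the extra $N^{-1}$ I would instead write
\[
(\sigma-\sigma_N,\Pi_N\phi)=(\sigma-\sigma_N,\phi)-(\sigma-\sigma_N,\phi-\Pi_N\phi),
\]
where the remainder is $O(N^{-2})\|\sigma-\sigma_N\|\|\phi\|_{H^2}$, and handle $(\sigma-\sigma_N,\phi)$ by a second duality step. Let $\psi\in H_0^1$ solve $-\Delta\psi=\phi$, so that $(\sigma-\sigma_N,\phi)=a(\sigma-\sigma_N,\psi)=a(\sigma-\sigma_N,\psi-\Pi_N\psi)$ by the first-equation Galerkin orthogonality. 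This is bounded by $\|\sigma-\sigma_N\|_{H^1}\|\psi-\Pi_N\psi\|_{H^1}\le CN^{3-s}\|\sigma\|_{H^{s-2}}\,N^{-3}\|\psi\|_{H^4}$.

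This step needs elliptic regularity $\|\psi\|_{H^4}\le C\|\phi\|_{H^2}\le C\|u-u_N\|$. On the smooth ball this holds because $\phi$ vanishes on $\mathbb{S}^2$, and I expect this compatibility and regularity bookkeeping to be the delicate point. Collecting all terms and dividing by $\|u-u_N\|$ gives the $L^2$ estimate.
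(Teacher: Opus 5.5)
Your proof is correct, and it shares the paper's overall structure. The $H^1$ part is identical: split off $\Pi_N u$, test the perturbed Galerkin relation with $\Pi_N u-u_N$, apply Poincar\'e, then the triangle inequality. Your duals $\phi$ and $\psi$ are exactly the paper's $\eta$ and $\psi$, obtained by factoring the dual biharmonic problem \eqref{eq:dual_biharmonic} as $-\Delta\eta=u-u_N$, $-\Delta\psi=\eta$. The paper's $\eta_N$ is just $\Pi_N\eta$.

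The two arguments differ only in how the coupling term $(\sigma-\sigma_N,\Pi_N\phi)$ is shown to be small.

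\textbf{The paper's route.} It uses that $\sigma-\sigma_N$ is $L^2$-orthogonal to $\Delta V_N$, since $(\sigma-\sigma_N,-\Delta w_N)=a(\sigma-\sigma_N,w_N)=0$, and replaces $\eta_N$ by $\eta_N+\Delta\psi_N$. Because $\eta_N+\Delta\psi_N=(\eta_N-\eta)+\Delta(\psi_N-\psi)$, extracting $N^{-2}\|\sigma-\sigma_N\|$ implicitly requires the $H^2$-norm approximation $\inf_{\psi_N\in V_N}\|\psi-\psi_N\|_{H^2}\le CN^{-2}\|\psi\|_{H^4}$. Lemma~\ref{lem:projection} does not supply this. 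In addition, the paper's displayed intermediate inequality drops the factor $\|\sigma-\sigma_N\|$.

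\textbf{Your route.} You peel off $(\sigma-\sigma_N,\phi-\Pi_N\phi)$ and transfer the Galerkin orthogonality of the first equation onto $\psi-\Pi_N\psi$ in the energy norm. This uses only the $L^2$ and $H^1$ bounds of Lemma~\ref{lem:projection} (with $m=2$ and $m=4$) and both bounds of Theorem~\ref{thm:quasi_opt_sigma}, so it stays within the tools actually stated. The minor cost is that you need the $H^1$ error of $\sigma$, whereas the paper's coupling estimate uses only $\|\sigma-\sigma_N\|$. Both routes give an $O(N^{-s})\|\sigma\|_{H^{s-2}}$ contribution from the coupling term. In both, the $N^{1-s}$ rate comes from the term $N^{-1}\|u-u_N\|_{H^1}$.

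One small correction: the bound $\|\psi\|_{H^4}\le C\|\phi\|_{H^2}$ is not delicate, and it has nothing to do with $\phi$ vanishing on $\mathbb{S}^2$. It is the standard shift theorem for the Dirichlet Laplacian on a $C^\infty$ domain, $\|w\|_{H^{k+2}}\le C\|\Delta w\|_{H^k}$ for $w\in H^1_0\cap H^{k+2}$. It needs no compatibility conditions, and the paper assumes the same estimate in \eqref{eq:dual_regularity}.
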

	\begin{proof}
		
		For any $v_N \in V_N$, it is direct to conclude that
		\begin{equation}\label{eq:orth-Pi}
			a(\Pi_N u - u_N, v_N) = a(u - u_N, v_N) = (\sigma - \sigma_N, v_N)_{L^2(\mathbb{B}^3)}.
		\end{equation}
		Setting $v_N = \Pi_N u - u_N \in H_0^1(\mathbb{B}^3)$ in \eqref{eq:orth-Pi}, we readily deduce that
		\[
		c \|\Pi_N u - u_N \|_{H^1(\mathbb{B}^3)}^2 \le  (\sigma - \sigma_N, \Pi_N u - u_N )_{L^2(\mathbb{B}^3)} \le \|\sigma - \sigma_N\| \|\Pi_N u - u_N \|.
		\]
		By the Poincar\'{e} inequality, there holds  
		\[
		\|\Pi_N u - u_N \|_{H^1(\mathbb{B}^3)} \le  C \|\sigma - \sigma_N\|.
		\]
		Then we obtain that
		\[
		\|u - u_N\|_{H^1(\mathbb{B}^3)} \le \|u - \Pi_N u\|_{H^1(\mathbb{B}^3)} + \|\Pi_N u  - u_N\|_{H^1(\mathbb{B}^3)} 
		\le C N^{1-s} \|u\|_{H^s(\mathbb{B}^3)} 
		+ C N^{2-s} \|\sigma\|_{H^{s-2}(\mathbb{B}^3)}.
		\]
		Subtracting \eqref{eq:discrete_problem} from \eqref{eq:weak_mixed}, we obtain the following identities:
		\begin{subequations}
			\begin{align}
				a(\sigma - \sigma_N, v_N) &= 0, \quad v_N \in V_N, \label{eq:error_sigma} \\
				a(u - u_N, w_N) &= (\sigma - \sigma_N, w_N)_{L^2(\mathbb{B}^3)}, \quad w_N \in V_N. \label{eq:error_u}
			\end{align}
		\end{subequations}
		
		To derive $L^2$-norm estimate for $u - u_N$, we recall an auxiliary dual problem: find $\psi \in H^4(\mathbb{B}^3)$ such that 
		\begin{equation}\label{eq:dual_biharmonic}
			\left\{
			\begin{aligned}
				\Delta^2 \psi  &= u - u_N, \quad  \text{in}\ \mathbb{B}^3, \\
				\psi & = 0, \quad  \text{on}\ \mathbb{S}^2,\\
				\Delta \psi & = 0, \quad \text{on}\ \mathbb{S}^2.
			\end{aligned}
			\right.
		\end{equation}
		Setting
		$-\Delta \eta = u - u_N$ and $-\Delta \psi = \eta $, the equivalent discrete weak formulations read: find $\eta_N \in V_N$ and $\psi_N \in V_N$ such that
		$$a(\eta_N, v_N) = (u - u_N, v_N)_{L^2(\mathbb{B}^3)}, \quad a(\psi_N, v_N) = (\eta_N, v_N)_{L^2(\mathbb{B}^3)}, \quad \forall v_N\in V_N.$$
		Based on the elliptic regularity, we have
		\begin{equation}\label{eq:dual_regularity}
			\|\eta\|_{H^2(\mathbb{B}^3)} \le C \|u - u_N\|, \quad \|\psi\|_{H^4(\mathbb{B}^3)} \le c \|\eta\|_{H^2(\mathbb{B}^3)} \le C \|u - u_N\|.
		\end{equation}
		It is direct to calculate that
		\begin{equation}\label{eq:L2_rep1}
			\begin{aligned}
				\|u - u_N\|^2 & = (u - u_N, -\Delta \eta)_{L^2(\mathbb{B}^3)} = a(u - u_N, \eta) 
				= a(u - u_N, \eta - \eta_N) + a(u - u_N, \eta_N) \\
				& = a(u - u_N, \eta - \eta_N) + (\sigma - \sigma_N, \eta_N)_{L^2(\mathbb{B}^3)}.
			\end{aligned}
		\end{equation}
		
		Next, for any $\psi_N \in V_N$, we utilize the orthogonality \eqref{eq:error_sigma} to get
		\begin{equation}\label{eq:L2_rep2}
			\begin{aligned}
				(\sigma - \sigma_N, \eta_N)_{L^2(\mathbb{B}^3)} 
				= (\sigma - \sigma_N, \eta_N + \Delta \psi_N)_{L^2(\mathbb{B}^3)} + a(\sigma - \sigma_N, \psi_N) = (\sigma - \sigma_N, \eta_N + \Delta \psi_N)_{L^2(\mathbb{B}^3)}.
			\end{aligned}
		\end{equation}
		Combining \eqref{eq:L2_rep1} with \eqref{eq:L2_rep2}, we arrive at
		\begin{equation*}
			\|u - u_N\|^2 = a(u - u_N, \eta - \eta_N) + (\sigma - \sigma_N, \eta_N + \Delta \psi_N)_{L^2(\mathbb{B}^3)}.
		\end{equation*}
		One directly declares that
		\begin{equation*}
			c \|u - u_N\|^2 \le \|u - u_N\|_{H^1(\mathbb{B}^3)} \|\eta - \Pi_N \eta\|_{H^1(\mathbb{B}^3)} + \|\psi - \psi_N \|_{H^2(\mathbb{B}^3)}
			\|\eta - \Pi_N \eta \|.
		\end{equation*}
		In view of the projection approximation properties in Lemma \ref{lem:projection} and \eqref{eq:dual_regularity}, we have
		\begin{align*}
			\|\eta - \Pi_N \eta\|_{H^1(\mathbb{B}^3)} \le C N^{-1} \|u - u_N\|,  \quad
			\|\eta - \Pi_N \eta\| \le c N^{-2} \|u - u_N\|.
		\end{align*}
		And hence, we obtain
		\begin{equation*}
			c \|u - u_N\| \le  N^{-1} \|u - u_N\|_{H^1(\mathbb{B}^3)} + N^{-2} \|\sigma - \sigma_N\|.
		\end{equation*}
		
		Finally, by incorporating the previously established $H^1$- and $L^2$-error estimates for $u$ and $\sigma$, we readily obtain
		\begin{align*}
			c \|u - u_N\|
			&\le N^{-1} ( N^{1-s} \|u\|_{H^s(\mathbb{B}^3)} + N^{2-s} \|\sigma\|_{H^{s-2}(\mathbb{B}^3)} ) + N^{-2} ( N^{2-s} ) \|\sigma\|_{H^{s-2}(\mathbb{B}^3)}\\
			&\le N^{-s}\|u\|_{H^s(\mathbb{B}^3)} + N^{1-s}  \|\sigma\|_{H^{s-2}(\mathbb{B}^3)},
		\end{align*}
		which completes the proof.
	\end{proof}

	\section{Numerical Experiments}
	
	In this section, we present two numerical examples to validate the accuracy and convergence rates of the proposed mixed spectral-Galerkin method. 
	Throughout this section, the maximum polynomial degree is denoted by $N$.
	To quantify the exponential convergence, we calculate the experimental convergence rate:
	$$\text{Rate}_v=\frac{\ln \left( E_{i-1}(v) / E_i(v) \right)}{N_{i} - N_{i-1}} ,$$
	where $E_i(v)$ denotes the corresponding $L^2$-error of $v$ at $N_i$.
	We employ $r = \sqrt{x_1^2 + x_2^2 + x_3^2}$ to denote the radial distance from the origin.
	
	\subsection{Case 1: Analytic function}
	To verify the  accuracy, we construct an exact solution:
	\begin{equation}\label{eq:exact_u}
		u(r) = \begin{cases}
			\frac{\sin(\pi r)}{r}, & r > 0, \\
			\pi, & r = 0.
		\end{cases}
	\end{equation}
	Note that this function is smooth. We evaluate the proposed mixed spectral-Galerkin method by measuring the numerical errors in both the $L^2$- and the $H^1$-norms. Table \ref{tab:errors} presents the numerical results for a sequence of polynomial degrees $N = 4, 8, 12, 16$.
	
	\begin{table}[H]
		\centering
		\caption{Numerical errors and convergence rates for Case 1}
		\label{tab:errors}
		\renewcommand{\arraystretch}{1.2}
		\begin{tabular}{|c|c|c|c|c|c|c|}
			\hline
			$N$ & $\|\sigma - \sigma_{N}\|_{H^1(\mathbb{B}^3)}$ & $\|u - u_N\|_{H^1(\mathbb{B}^3)}$ & $\|\sigma - \sigma_{N}\|$ & $\|u - u_N\|$ & $\text{Rate}_{\sigma}$ & $\text{Rate}_{u}$ \\
			\hline
			4  & 8.182848e-01 & 9.950927e-02 & 2.259174e-01 & 2.428087e-02 & -- & -- \\
			8  & 1.857498e-03 & 1.949815e-04 & 1.884392e-04 & 1.941540e-05 & 1.7723 & 1.7828 \\
			12 & 6.497892e-07 & 6.664685e-08 & 3.650210e-08 & 3.727417e-09 & 2.1373 & 2.1395 \\
			16 & 6.151159e-11 & 7.033383e-12 & 2.469976e-12 & 2.499720e-13 & 2.4002 & 2.4025 \\
			\hline
		\end{tabular}
	\end{table}
	
	The quantitative results in Table \ref{tab:errors} clearly demonstrate spectral accuracy of our proposed scheme. As the polynomial degree $N$ increases, the errors decay exponentially, which is consistent with the high smoothness of the exact solution. 
%
	To further visualize this convergence behavior, Figure \ref{fig:convergence} presents the semi-logarithmic profiles of the $L^2$-errors for both $u$ and $\sigma$ comparing with a red dashed line segment equipped slope $-1$. The strictly linear trajectory observed in these plots is a hallmark of spectral convergence. This confirms that our mixed formulation effectively captures the smooth features of the solution with high precision and stability.

		\begin{figure}[htbp]
			\centering
			\includegraphics[width=8in]{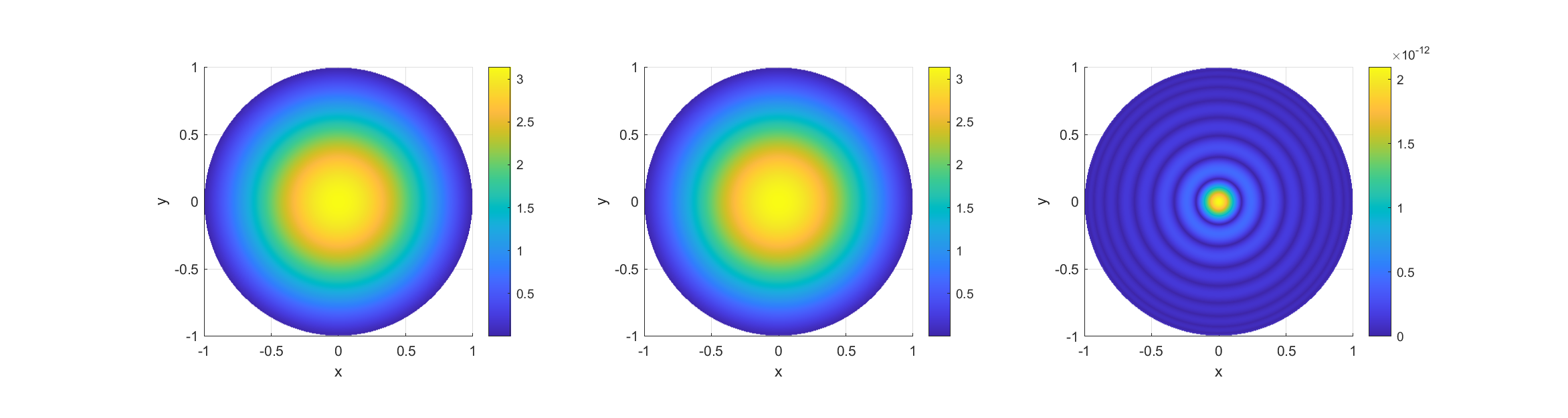}
			\caption{The profiles of $u$, $u_N$ and its point-wise errors at $N=16$.}
			\label{fig:e1_u_16}
		\end{figure}
	
			\begin{figure}[htbp]
				\centering
				\includegraphics[width=8in]{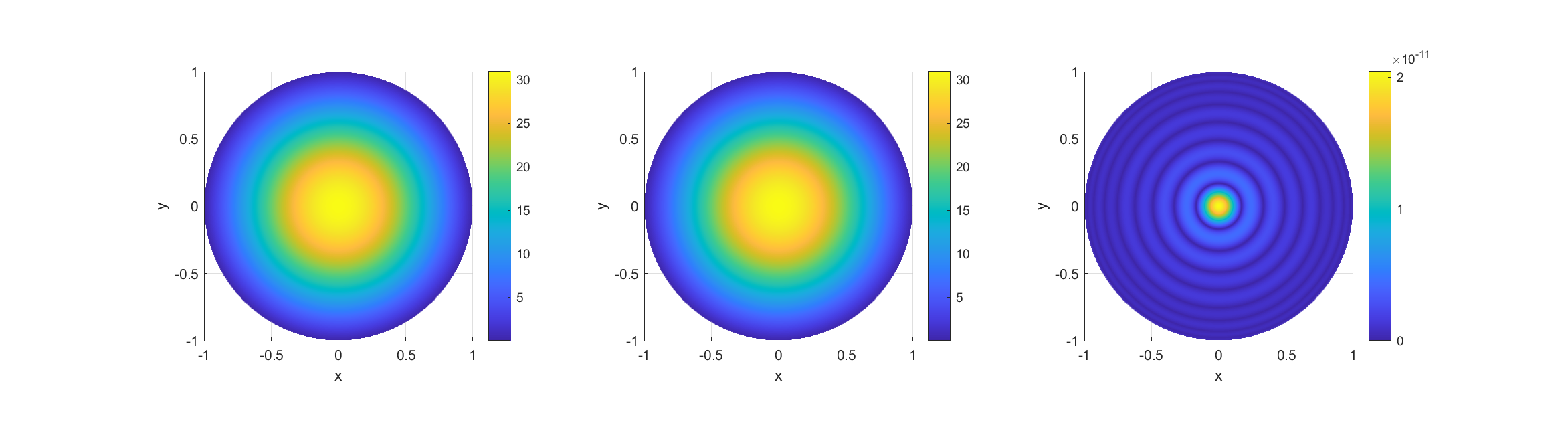}
				\caption{The profiles of $\sigma$, $\sigma_N$ and its point-wise errors at $N=16$.}
				\label{fig:e1_sig_16}
			\end{figure}
	
	
	\begin{figure}[H]
		\centering
		\subfigure[Convergence profiles of $\|u - u_N\|$]{
			\includegraphics[width=3.5in]{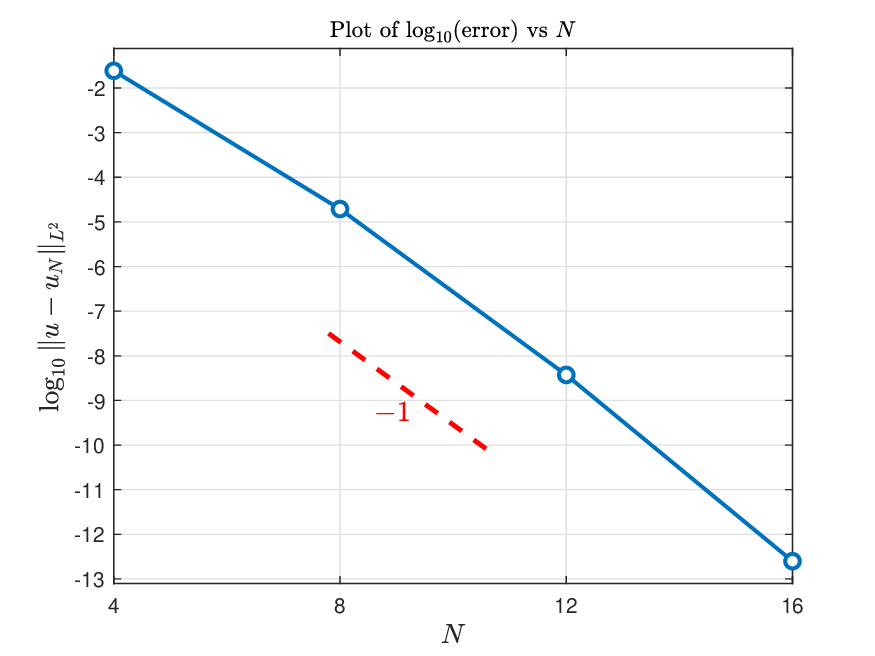}
		}
		\hfill
		\subfigure[Convergence profiles of $\|\sigma - \sigma_{N}\|$]{
			\includegraphics[width=3.5in]{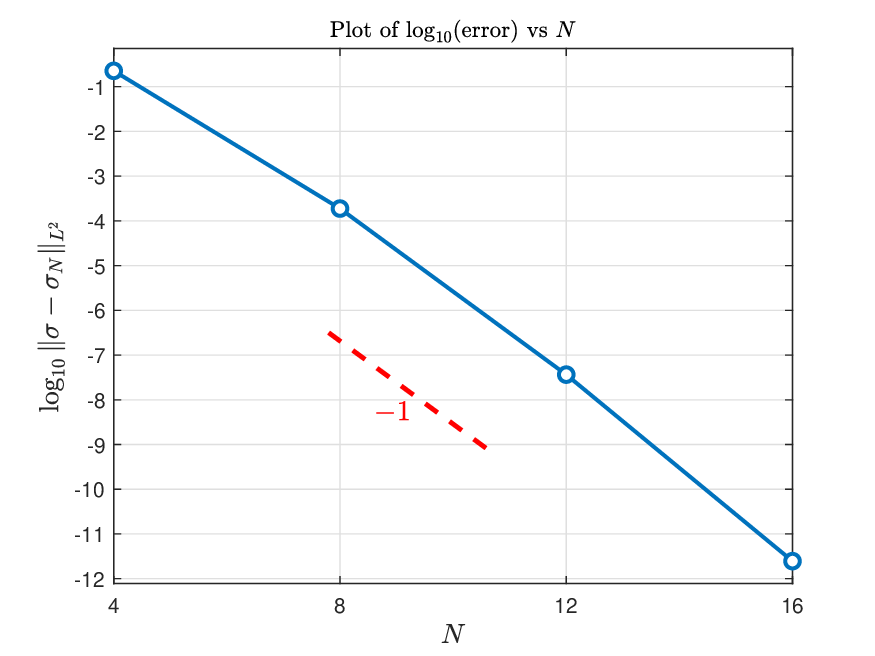}
		}
		\caption{Semi-logarithmic convergence for Case 1}\label{fig:convergence}
	\end{figure}

	\subsection{Case 2: Transcendental function}
	
	To further assess the accuracy of the proposed method for smooth but non-polynomial functions. We construct the exact solution as:
	\begin{equation}
		u(r) = \mathrm{e}^{r^2} - \frac{5\mathrm{e}}{3}r^2 + \frac{2\mathrm{e}}{3}.
	\end{equation}
	This solution is analytic but not a polynomial and exhibits monotonic behavior in the radial direction. Table \ref{tab:smooth_transcendental} presents the convergence results for the primal and auxiliary variables. Consistent with the previous case, the method exhibits excellent spectral accuracy, with the $L^2$-error of $u$ decaying rapidly from $\mathcal{O}(10^{-2})$ at $N=4$ to $\mathcal{O}(10^{-11})$ at $N=16$.
	
	\begin{table}[H]
		\centering
		\caption{Numerical errors and convergence rates for Case 2}
		\label{tab:smooth_transcendental}
		\renewcommand{\arraystretch}{1.2}
		\begin{tabular}{|c|c|c|c|c|c|c|}
			\hline
			$N$ & $\|\sigma - \sigma_{N}\|_{H^1(\mathbb{B}^3)}$ & $\|u - u_N\|_{H^1(\mathbb{B}^3)}$ & $\|\sigma - \sigma_{N}\|$ & $\|u - u_N\|$ & $\text{Rate}_{\sigma}$ & $\text{Rate}_{u}$ \\
			\hline
			4  & 1.222364e+00 & 3.533737e-02 & 3.205838e-01 & 1.417889e-02 & -- & -- \\
			8  & 1.330501e-02 & 4.220701e-04 & 1.309173e-03 & 4.418767e-05 & 1.3752 & 1.4420 \\
			12 & 4.421004e-05 & 1.165379e-06 & 2.431017e-06 & 6.526781e-08 & 1.5722 & 1.6294 \\
			16 & 6.996362e-08 & 1.543859e-09 & 2.536715e-09 & 5.630076e-11 & 1.7163 & 1.7639 \\
			\hline
		\end{tabular}
	\end{table}
	
	The exponential decay of the numerical errors is further illustrated in Figure \ref{fig:case2_plots}, which is compared with a red dashed line segment equipped slope $-1$. The distinct linear trajectories observed in the semi-logarithmic profiles for $u$ and $\sigma$ strongly confirm the spectral convergence. These results reinforce the high precision of the proposed mixed spectral-Galerkin method, even when applied to non-oscillatory transcendental solutions.
	
%

		\begin{figure}[htbp]
			\centering
			\includegraphics[width=8in]{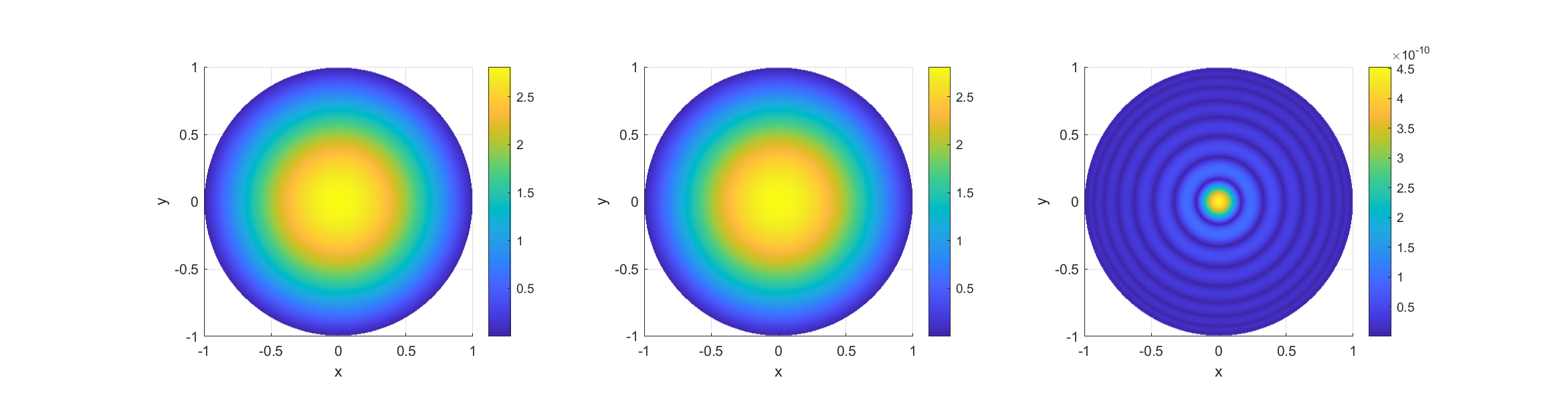}
			\caption{The profiles of $u$, $u_N$ and its point-wise errors at $N=16$}
			\label{fig:e2_u_16}
		\end{figure}
		
			\begin{figure}[htbp]
			\centering
			\includegraphics[width=8in]{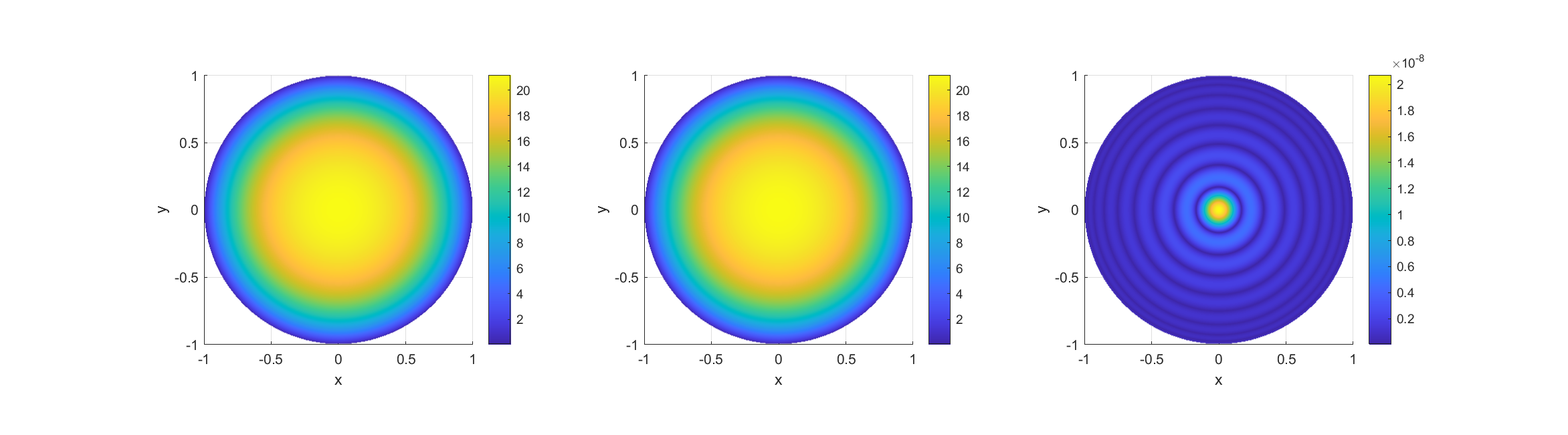}
			\caption{The profiles of $\sigma$, $\sigma_N$ and its point-wise errors at $N=16$}
			\label{fig:e2_sig_16}
		\end{figure}
		
	\begin{figure}[H]
		\centering
		\subfigure[Convergence profiles of $\|u - u_N\|$]{
			\includegraphics[width=3.5in]{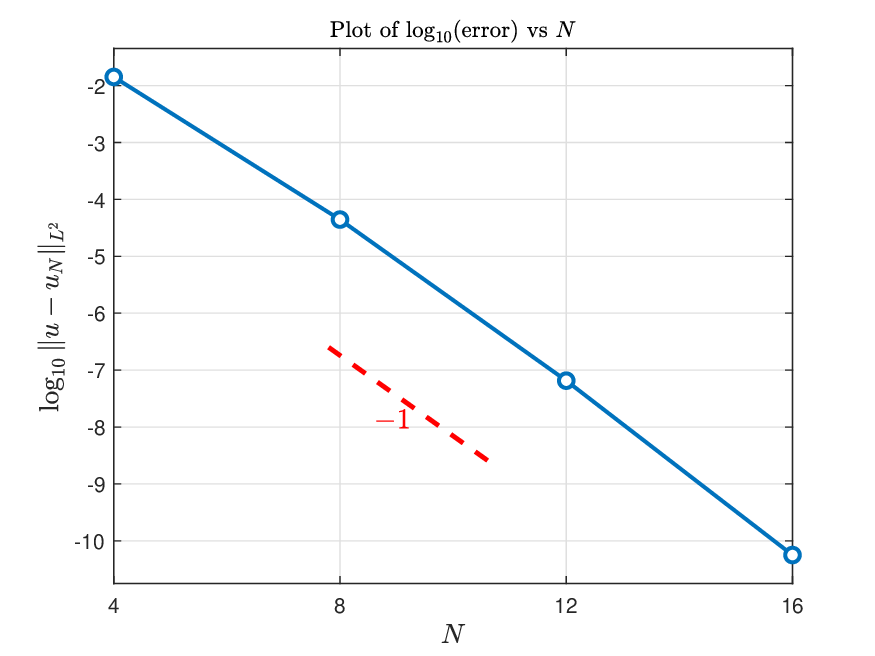}
		}
		\hfill
		\subfigure[Convergence profiles of $\|\sigma - \sigma_{N}\|$]{
			\includegraphics[width=3.5in]{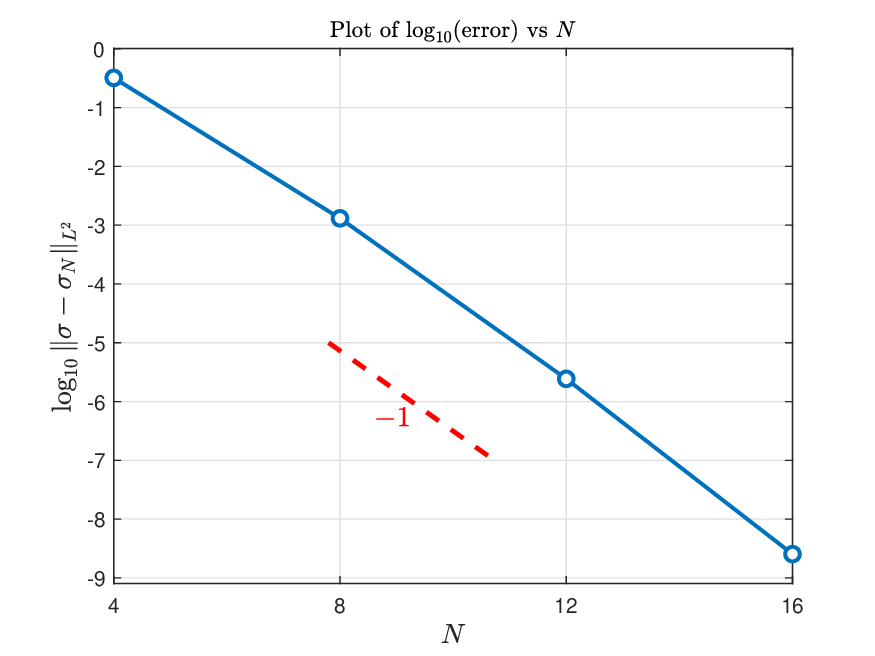}
		}
		\caption{Semi-logarithmic convergence for Case 2}
		\label{fig:case2_plots}
	\end{figure}

	\section{Conclusions}
	
	In this paper, we have proposed and analyzed a mixed spectral-Galerkin method for solving the biharmonic equation with simply supported boundary conditions on the unit ball. By introducing an auxiliary variable, the original fourth-order problem is successfully decoupled into a system of second-order equations. A key feature of our method is the basis construction using generalized ball polynomials, which yields a strictly diagonal stiffness matrix, thereby significantly reducing the computational cost. Furthermore, we rigorously proved the a-priori error estimates in both the $L^2$- and $H^1$-norms for the primal variable and auxiliary variable, respectively. The extensive numerical experiments highly align with our theoretical analysis, further confirming the spectral accuracy and efficiency of the proposed mixed method.

	
	\bibliographystyle{siamplain}

\end{document}